\newtheorem{prop}{Proposition}[section]
\newtheorem{lem}[prop]{Lemma}
\newtheorem{defi}[prop]{Definition}
\newtheorem{coro}[prop]{Corollary}
\newtheorem{theo}{Theorem}[section]
\newtheorem{exam}[prop]{Example}
\newtheorem{rema}[prop]{Remark}
\newif\ifdraft
\numberwithin{equation}{section}
\begin{document}
%\maketitle

\title[Constructing Whitney sets via IFS with condensation]{Constructing Whitney sets via IFS with condensation}
\author{Qi-han Yuan} \address{Department of Mathematics and Statistics, Central China Normal University, Wuhan, 430079, China
} \email{yuanqihan\_ccnu@sina.com
 }

\author{Yuan Zhang$^*$} \address{Department of Mathematics and Statistics, Central China Normal University, Wuhan, 430079, China
} \email{yzhang@mail.ccnu.edu.cn
 }

\date{\today}
\thanks {The work is supported by NSFS Nos. 11971195, 12071171 and 11601172.}

\thanks{{\bf 2010 Mathematics Subject Classification:}  28A80, 54F45\\
 {\indent\bf Key words and phrases:}\ Whitney set, self-similar arc, IFS with condensation}

\thanks{* The correspondence author.}

\begin{abstract}
In 1935, Whitney constructed a smooth function for which the Morse-Sard Theorem does not hold.
Whitney's  construction is closely related to certain compact connected set, which is called Whitney set now.
From then on, there are a lot of works on Whitney sets.
 In this paper, we use IFS with condensation, a notion   introduced by Barnsley and Demko in 1985, to construct Whitney arcs and Whitney sets.
 Our construction includes  most early results as special cases.
\end{abstract}
\maketitle

\section{Introduction}
In 1935, Whitney \cite{Whitney_1935} published an example of an arc $\gamma$ in $\mathbb{R}^{2}$ with the property that, there exists a $C^{1}$ function $f:\mathbb{R}^{2}\rightarrow\mathbb{R}$ such that the gradient $\nabla f|_{\gamma}\equiv 0$, but $f|_{\gamma}$ is not a constant function. Recall that any point at which all its first partial derivatives vanish is called a \emph{critical point} of $f$, and a connected component of the set of critical points is called a \emph{critical set} of $f$. In Whitney's example (see Figure 1 (b)), a critical set of $f$ contains an arc of Hausdorff dimension $\log4 / \log3$, and the image of this arc contains an interval. This shows that the Morse-Sard Theorem does not hold when the $C^{2}$-smoothness is replaced by $C^{1}$-smoothness. In general, the Whitney set is defined as follows.

\begin{defi}
A compact connected set $E\subset\mathbb{R}^{s}$ is said to be a \textbf{Whitney set}, if there is a $C^{1}$ function $f:\mathbb{R}^{s}\rightarrow\mathbb{R}$ such that the gradient $\nabla f|_{E}\equiv 0$ and $f|_{E}$ is not a constant.
\end{defi}

There are some works devoted to the construction of Whitney sets. Norton \cite{Norton_1989} showed that if $\gamma$ is a $t$-quasi-arc with $t < \dim_{H}\gamma$, then $\gamma$ is a Whitney set. Wen and Xi \cite{Xi_2003} proved that any self-similar arc of Hausdorff dimension$\ > 1$ is a Whitney set. (An arc $\gamma$ in $\mathbb{R}^{s}$ is said to be a \emph{$t$-quasi-arc} with $t\geq 1$, if there is a constant $C$ such that $\mid\gamma_{\widehat{xy}}\mid^{t} \leq C\mid x-y\mid$ for any $x,y\in \gamma$, where $\gamma_{\widehat{xy}}$ is the subarc connecting $x$ and $y$, and $\mid\gamma_{\widehat{xy}}\mid$ denotes the diameter.)

It is also shown that the following two kinds of sets are not Whitney sets: If every pair of points in $E$ can be connected by a rectifiable arc lying in $E$, then $E$ is not a Whitney set (Whyburn \cite{Whyburn_1929}, 1929); the graph $G$ of any continuous function $g:\mathbb{R}\rightarrow\mathbb{R}$ is not a Whitney set (Choquet \cite{Choquet_1944}, 1944).

Wen and Xi \cite{Xi_2009} gave a criterion for  Whitney set, according to certain nested decompositions of $E$. Especially, they showed that some Moran constructions are Whitney sets. However, the conditions in the criterion are not always easy to check. For other works on Whitney set, see \cite{Bates_1996,Besicovitch_1961,Norton_1986,Souto_2019}.

Let $\{\varphi_{i}\}_{i=1}^{N}$ be an iterated function system (IFS) on $\mathbb{R}^{s}$ such that
  all $\varphi_{i}$ are  contractive similitudes.
We shall denote  by  $\rho_i$ the contraction ratio of $\varphi_i$.
 The unique nonempty compact subset $E$ satisfying
 $
 E=\bigcup\limits_{i=1}^{N} \varphi_{i}(E),
 $
 is called a \emph{self-similar set}, and the unique $s>0$ satisfying $\Sigma_{i=1}^{N}\rho_{i}^{s}=1$ is called the \emph{similarity dimension} of $\{\varphi_{i}\}_{i=1}^{N}$. See Hutchinson \cite{Hutchinson_1981}.
 We also call $E$ the \emph{attractor} of $\{\varphi_{i}\}_{i=1}^{N}$.

In the present paper, we use IFS with condensation, a notion introduced by Barnsley and Demko \cite{Barnsley_1985}, to construct Whitney sets.
Let $\{\varphi_{i}\}_{i=1}^{N}$ be an IFS on $\mathbb{R}^{s}$,
 and $C$ be a compact subset of $\mathbb{R}^{s}$. Then  (\cite{Barnsley_1985})
there is a unique nonempty compact subset $K$ of $\mathbb{R}^{s}$ satisfying
$$K=C\cup\bigcup\limits_{i=1}^{N} \varphi_{i}(K).$$
Following \cite{Barnsley_1985}, we call $\mathcal{S} \! = \! \{C; \varphi_{1}, \varphi_{2}, \ldots, \varphi_{N}\}$ an \emph{IFS with condensation}, $C$ the \emph{condensation set},  and  $K$ the \emph{attractor} of $\mathcal{S}$.

For $\boldsymbol{\omega}=(\omega_{1}, \ldots, \omega_{k})\in \{1,2,\ldots, N\}^{k}$, $k\geq 0$, we denote by $|\boldsymbol{\omega}|:=k$ the length of $\boldsymbol{\omega}$.
We define $\varphi_{\boldsymbol{\omega}}=\varphi_{\omega_1 \ldots \omega_k}=\varphi_{\omega_1}\circ\ldots\circ \varphi_{\omega_k}$ and $\rho_{\boldsymbol{\omega}}=\prod_{i=1}^k \rho_{\omega_{i}} $. Set $\varphi_{\varepsilon}=id$ and $\rho_{\varepsilon}=1$ ,
where $\varepsilon$ is the empty-word.
By Ma and Zhang \cite{Ma_2020},
\begin{equation}\label{a}
K=\bigg(\bigcup\limits_{0\leq|\boldsymbol{\omega}|<k} \varphi_{\boldsymbol{\omega}}(C)\bigg)\cup\bigg(\bigcup\limits_{|\boldsymbol{\omega}|=k} \varphi_{\boldsymbol{\omega}}(K)\bigg),
\end{equation}
and
\begin{equation}\label{b}
K=\bigg(\bigcup\limits_{|\boldsymbol{\omega}|\geq 0} \varphi_{\boldsymbol{\omega}}(C)\bigg)\cup E,
\end{equation}
where $E$ is the attractor of the IFS $\{\varphi_{i}\}_{i=1}^{N}$.

We say that an arc $\gamma$ can be decomposed into $\gamma_{1}+\gamma_{2}+\cdots+\gamma_{m}$, if all $\gamma_{\ell}$ are subarcs of $\gamma$, and $\gamma$ is a joining of $\gamma_{1},\gamma_{2},\cdots,\gamma_{m}$. Our main results is the following.

\begin{theo}\label{Main_result_1}
Let $\mathcal{S}=\{C; \varphi_{1}, \varphi_{2}, \ldots, \varphi_{N}\}$ be an IFS with condensation, such that the condensation set $C=\bigcup\limits_{j=1}^{m} C_{j}$ is a disjoint union of arcs $C_{j}$ with $m\geq 2$. Let $K$ be the attractor of $\mathcal{S}$. Then $K$ is a Whitney set, if the following conditions hold:

$(\textrm{i})$ $K$ is an arc;

$(\textrm{ii})$ The arc $K$ can be decomposed into $\gamma_{1}+\gamma_{2}+\cdots+\gamma_{m+N}$, where $\gamma_{1},\gamma_{2},\ldots,\gamma_{m+N}$ is a rearrangement of $C_{1},\ldots,C_{m},\varphi_{1}(K),\ldots,\varphi_{N}(K)$ such that $\gamma_{1}, \gamma_{m+N}\in\{C_{1},\ldots,C_{m}\}$;

$(\textrm{iii})$ $\dim_{H}E>1$, where $E$ is the attractor of the IFS $\{\varphi_{i}\}_{i=1}^{N}$.
\end{theo}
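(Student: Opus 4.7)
The plan is to construct a continuous non-constant function $g\colon K \to \mathbb{R}$ satisfying a H\"older estimate $|g(x) - g(y)| \le M\,|x - y|^{s^*}$ on $K$, where $s^*$ is the similarity dimension of $\{\varphi_i\}_{i=1}^N$. Since $\dim_H E \le s^*$, condition (iii) forces $s^* > 1$, so the H\"older bound automatically upgrades to $|g(x)-g(y)| = o(|x-y|)$ uniformly on $K$. The classical Whitney $C^1$ extension theorem then yields $f \in C^1(\mathbb{R}^s)$ with $f|_K = g$ and $\nabla f|_K \equiv 0$, and non-constancy of $g$ makes $K$ a Whitney set.

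To construct $g$, assign weights $w_k := \rho_i^{s^*}$ when $\gamma_k = \varphi_i(K)$ and $w_k := 0$ when $\gamma_k = C_j$; since $\sum_i \rho_i^{s^*} = 1$, the cumulative sums $F_k := \sum_{\ell \le k} w_\ell$ satisfy $F_0 = 0$ and $F_{m+N} = 1$. Define $g$ recursively: set $g \equiv F_{k-1}$ on $\gamma_k = C_j$, and $g(x) = F_{k-1} + \rho_i^{s^*}\, \tilde g(\varphi_i^{-1}(x))$ on $\gamma_k = \varphi_i(K)$, where $\tilde g$ is the same construction applied to the internal copy of $K$. Values agree at shared endpoints, so $g$ is continuous on $K$; condition (ii) gives $g=0$ on $\gamma_1$ and $g=1$ on $\gamma_{m+N}$, hence $g$ is non-constant. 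By construction, $g$ has oscillation exactly $\rho_{\boldsymbol{\omega}}^{s^*}$ on $\varphi_{\boldsymbol{\omega}}(K)$ and is constant on every $\varphi_{\boldsymbol{\omega}}(C_j)$.

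For the H\"older estimate, given distinct $x, y \in K$, let $\boldsymbol{\omega}$ be a longest word with $x, y \in \varphi_{\boldsymbol{\omega}}(K)$. If $x, y$ lie in the same condensation image $\varphi_{\boldsymbol{\omega}}(C_j)$, then $g(x) = g(y)$ and we are done. Otherwise they lie in distinct pieces $\varphi_{\boldsymbol{\omega}}(\gamma_p), \varphi_{\boldsymbol{\omega}}(\gamma_q)$ of the decomposition of $\varphi_{\boldsymbol{\omega}}(K)$. When $\gamma_p, \gamma_q$ are non-adjacent, $|x - y| \ge \delta_0 \rho_{\boldsymbol{\omega}}$ where $\delta_0>0$ is the minimum pairwise distance between non-adjacent level-$0$ pieces, giving $|g(x)-g(y)| \le \rho_{\boldsymbol{\omega}}^{s^*} \le \delta_0^{-s^*}|x-y|^{s^*}$. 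When $\gamma_p, \gamma_q$ are adjacent and meet at $e$, a rescaled compactness argument (relying only on the level-$0$ geometry) yields $\delta_1 > 0$ such that $|x-y| < \delta_1 \rho_{\boldsymbol{\omega}}$ forces both $x$ and $y$ into condensation sub-arcs abutting $\varphi_{\boldsymbol{\omega}}(e)$ on which $g \equiv g(\varphi_{\boldsymbol{\omega}}(e))$, whereas $|x-y| \ge \delta_1 \rho_{\boldsymbol{\omega}}$ again yields $|g(x) - g(y)| \le \delta_1^{-s^*}|x-y|^{s^*}$. Taking $M := \max(\delta_0^{-s^*}, \delta_1^{-s^*})$ completes the estimate.

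The main obstacle is precisely the adjacent sub-case. One has to upgrade the purely topological intersection condition $\gamma_p \cap \gamma_q = \{e\}$ into a uniform quantitative threshold $\delta_1$, and then trace how condition (ii) propagates local constancy of $g$ near such junctions at every recursive depth. Condition (ii) enters essentially: it guarantees that at every level the shared endpoint of two adjacent pieces is always bordered by a condensation arc of the next-level decomposition, where $g$ is by construction constant. Without (ii), two adjacent self-similar pieces could meet at an endpoint lying in $E$, allowing $g$ to vary right up to the junction and destroying any uniform H\"older bound.
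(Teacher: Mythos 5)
Your approach is sound and follows the same overall strategy as the paper---define a monotone ``mass'' function along the arc and estimate its modulus of continuity via the longest word $\boldsymbol{\omega}$ with $x,y\in\varphi_{\boldsymbol{\omega}}(K)$, then split into same-piece / non-adjacent / adjacent cases---but your choice of function is genuinely different and arguably more robust. The paper takes $f(z)=\mathcal{H}^{s}(\Gamma_{\widehat{az}}\cap E)$ with $s=\dim_{H}E$, so the bounds $M_{i}(\rho_{\boldsymbol{\nu}})^{s-1}$ involve the constant $\mathcal{H}^{s}(E)$ and the construction implicitly needs $0<\mathcal{H}^{s}(E)<\infty$, which holds under OSC but is delicate in general. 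You instead take the self-similar Bernoulli measure with weights $\rho_{i}^{s^{*}}$ at the similarity dimension $s^{*}$, which is always a well-defined probability measure, so $g(a)=0$, $g(b)=1$ and non-constancy is immediate; the H\"older exponent used is $s^{*}\ge\dim_{H}E>1$. For the adjacent sub-case you invoke a rescaled compactness argument to produce the threshold $\delta_{1}$, whereas the paper performs an explicit second-level sub-case analysis (whether $z_{2}$ lands in $\varphi_{\boldsymbol{\nu}}(C_{j})$ or $\varphi_{\boldsymbol{\nu}u'}(\gamma_{1})$, giving $f(z_{1})=f(z_{2})$, or in $\varphi_{\boldsymbol{\nu}u'}(\gamma_{p_{2}})$ with $p_{2}\ge 2$, giving the explicit gap $\xi_{3}$); these amount to the same thing, and the compactness argument you sketch does work because the $C_{j}$ are non-degenerate arcs and $\gamma_{p}\cap\gamma_{q}$ is a single point. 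One small correction: the recursion $g(x)=F_{k-1}+\rho_{i}^{s^{*}}g(\varphi_{i}^{-1}(x))$ on $\gamma_{k}=\varphi_{i}(K)$ tacitly assumes $\varphi_{i}$ preserves the arc orientation; if $\varphi_{i}$ reverses it you need $g(x)=F_{k-1}+\rho_{i}^{s^{*}}\bigl(1-g(\varphi_{i}^{-1}(x))\bigr)$ to keep $g$ continuous at the junctions. Defining $g(z)$ directly as the $\mu$-mass of $\Gamma_{\widehat{az}}$, as the paper does with $\mathcal{H}^{s}|_{E}$, sidesteps this bookkeeping.
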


An arc $\gamma$ is called a \emph{self-similar arc}, if $\gamma$ is the attractor of a family of contractive similitudes $\{\varphi_{i}\}_{i=1}^{N}$  such that
 $\varphi_{i}(\gamma)\cap \varphi_{j}(\gamma)$ is a singleton when $|i-j| = 1$
and
 $\varphi_{i}(\gamma)\cap \varphi_{j}(\gamma)=\emptyset$ when $|i-j| > 1$.

As an application of Theorem \ref{Main_result_1}, we show that:

\begin{coro}\label{Corollary_1}
If $\gamma$ is a self-similar arc with $\dim_{H}\gamma>1$, then $\gamma$ is a Whitney set.
\end{coro}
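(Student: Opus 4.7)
The plan is to realize $\gamma$ as the attractor of an IFS with condensation satisfying the hypotheses of Theorem~\ref{Main_result_1}. Let $\{\psi_i\}_{i=1}^M$ be the similitudes generating the self-similar arc $\gamma$, with contraction ratios $\rho_i$, and write $s = \dim_H \gamma > 1$, so $\sum_{i=1}^M \rho_i^s = 1$. Iterating the self-similar arc property, at level $n$ the $M^n$ pieces $\psi_{\boldsymbol{\omega}}(\gamma)$ with $|\boldsymbol{\omega}|=n$ list in order along $\gamma$, with consecutive pieces sharing a single endpoint and non-consecutive pieces disjoint. Write $\boldsymbol{\omega}_L$ and $\boldsymbol{\omega}_R$ for the first and last words in this listing, and set $P_L = \psi_{\boldsymbol{\omega}_L}(\gamma)$, $P_R = \psi_{\boldsymbol{\omega}_R}(\gamma)$.

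Pick $n$ large enough that $M^n \geq 3$, so $P_L \cap P_R = \emptyset$. Consider the IFS with condensation $\mathcal{S}_n$ having condensation set $C = P_L \sqcup P_R$ (a disjoint union of $m = 2$ arcs) and maps $\{\psi_{\boldsymbol{\omega}} : |\boldsymbol{\omega}| = n,\; \boldsymbol{\omega} \notin \{\boldsymbol{\omega}_L, \boldsymbol{\omega}_R\}\}$. Since
\[
\gamma \;=\; P_L \cup P_R \cup \bigcup_{\substack{|\boldsymbol{\omega}|=n \\ \boldsymbol{\omega} \neq \boldsymbol{\omega}_L, \boldsymbol{\omega}_R}} \psi_{\boldsymbol{\omega}}(\gamma),
\]
uniqueness of the attractor forces $K = \gamma$. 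Hypothesis (i) is immediate, and the level-$n$ ordering gives exactly the arc decomposition required by (ii), with $P_L, P_R$ appearing at the two ends.

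The main task is verifying hypothesis (iii): $\dim_H E > 1$, where $E$ is the attractor of the truncated IFS. Since the level-$n$ pieces of the self-similar arc intersect only at isolated points, the open set condition descends to this sub-IFS, so $\dim_H E$ equals its similarity dimension $s_n$, the unique solution of $f_n(s_n) = 1$, where
\[
f_n(t) \;=\; \Bigl(\sum_{i=1}^M \rho_i^t\Bigr)^n - \rho_{\boldsymbol{\omega}_L}^t - \rho_{\boldsymbol{\omega}_R}^t.
\]
At $t = s$ we have $f_n(s) = 1 - \rho_{\boldsymbol{\omega}_L}^s - \rho_{\boldsymbol{\omega}_R}^s < 1$. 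At $t = 1$, the hypothesis $s > 1$ combined with $0 < \rho_i < 1$ gives $\sum_i \rho_i > \sum_i \rho_i^s = 1$, hence $(\sum_i \rho_i)^n \to \infty$ while $\rho_{\boldsymbol{\omega}_L}, \rho_{\boldsymbol{\omega}_R} \to 0$, so $f_n(1) > 1$ for all sufficiently large $n$. Since $t \mapsto f_n(t)$ is strictly decreasing, $s_n \in (1, s)$, giving $\dim_H E > 1$. Theorem~\ref{Main_result_1} then yields that $\gamma$ is a Whitney set.
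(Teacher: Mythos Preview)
Your approach is exactly the paper's: pass to level $n$, strip off the two extreme pieces to form the condensation set, and verify the hypotheses of Theorem~\ref{Main_result_1}, the main work being the dimension bound $s_n>1$ for $n$ large. Two points need tightening, though.

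First, you set $s=\dim_H\gamma$ and immediately write $\sum_i\rho_i^s=1$. That equality characterizes the \emph{similarity} dimension, and the whole force of the corollary (see Remark~1.3) is that the open set condition is \emph{not} assumed, so a priori you only know $\dim_s\gamma\ge\dim_H\gamma>1$. Take $s=\dim_s\gamma$ instead; nothing else in your estimate changes.

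Second, and more substantively, the sentence ``since the level-$n$ pieces intersect only at isolated points, the open set condition descends to this sub-IFS'' is not a valid deduction: single-point overlaps do \emph{not} imply OSC in general, and the paper explicitly cites a self-similar arc failing OSC. What the paper uses (and what you should argue) is that the truncated IFS actually satisfies the \emph{strong} separation condition. The key observation is that $a,b\notin E$: since $E\subset\bigcup_{\boldsymbol{\omega}\ne\boldsymbol{\omega}_L,\boldsymbol{\omega}_R}\psi_{\boldsymbol{\omega}}(\gamma)$ and the endpoints $a,b$ of the arc $\gamma$ lie only in the two deleted end pieces, they are absent from $E$. Consecutive level-$n$ pieces meet at a single point of the form $\psi_{\boldsymbol{\omega}}(c)$ with $c\in\{a,b\}$, so that point does not lie in $\psi_{\boldsymbol{\omega}}(E)$, and the images $\psi_{\boldsymbol{\omega}}(E)$ are pairwise disjoint. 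With SSC established you legitimately obtain $\dim_H E=s_n>1$.
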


\begin{rema}
\rm This result was proved by Wen and Xi \cite{Xi_2003} under the open set condition. We note that Kamalutdinov, Tetenov and Vaulin \cite{Tetenov_2020} showed that there exists a self-similar arc in $\mathbb{R}^{3}$ which does not satisfy the open set condition.
\end{rema}

Next, we generalize Theorem \ref{Main_result_1} to a setting which requires $K$ has a chain structure instead of being an arc. First, we introduce a notion of linear IFS with condensation, which is a generalization of zipper introduced by Aseev, Tetenov and Kravchenko \cite{Tetenov_2003}.

Let $A$ be a compact set, and  let $a,b$ be two points in  $A$. We  shall call $(A,\{a,b\})$ a \emph{button} with two endpoints $a$ and $b$. If $(A',\{a',b'\})$ is a button such that $A'\subset A$, then we call $(A',\{a',b'\})$ a \emph{sub-button} of $(A,\{a,b\})$. We say
\begin{equation}\label{c}
(A,\{a,b\}) \! = \! (A_{1},\{a_{1},b_{1}\})+(A_{2},\{a_{2},b_{2}\})+\cdots+(A_{m},\{a_{m},b_{m}\})
\end{equation}
is a \emph{zipper decomposition} of $(A,\{a,b\})$, if the following conditions hold:

$(\textrm{i})$ $A=\bigcup_{j=1}^m A_j,$  and all $(A_{\ell},\{a_{\ell},b_{\ell}\})$ are sub-buttons of $(A,\{a,b\})$;

$(\textrm{ii})$ the endpoints satisfy $a \! = \! a_1, b_\ell \! = \! a_{\ell+1} (1\leq \ell \leq m-1),  b_{m} \! = \! b$;

$(\textrm{iii})$ for arbitrary $\ell',\ell''\in\{1,\ldots,m\}$, we have $\#(A_{\ell'}\cap A_{\ell''}) \! = \! \begin{cases} 0,& \text{if} \ \ |\ell'-\ell''|> 1 \\ 1,& \text{if} \ \ |\ell'-\ell''|=1. \end{cases}$

Let $\mathcal{S}=\{C; \varphi_{1}, \varphi_{2}, \ldots, \varphi_{N}\}$ be an IFS with condensation. From now on, we assume that the condensation set $C$ is a finite disjoint union of compact connected sets, that is, $C=\bigcup_{j=1}^{m} C_{j}$, $m\geq2$, $C_{j}$ are connected components of $C$ and $C_{j}$ are compact; in this case, we call $\mathcal{S}$ an \emph{IFS with finite-component condensation}.

\begin{defi}{\rm Let $\mathcal{S}=\{C; \varphi_{1}, \varphi_{2}, \ldots, \varphi_{N}\}$ be an IFS with finite-component condensation, and let $K$ be its attractor.
If there exist $\{a,b\} \! \subseteq \! K$ and $\{a_{j},b_{j}\} \! \subseteq \! C_{j}$, $1\leq j\leq m$, such that
\begin{equation}\label{d}
(K,\{a,b\})=A_{1}+A_{2}+\cdots+A_{m+N}
\end{equation}
is a zipper decomposition of $(K,\{a,b\})$, where $A_{1},\ldots,A_{m+N}$ is a rearrangement of \begin{equation}\label{eq:Aj}
\big(C_{1},\{a_{1},b_{1}\}\big) \ , \ldots, \ \big(C_{m},\{a_{m},b_{m}\}\big) \ , \ (\varphi_{1}(K),\{\varphi_{1}(a),\varphi_{1}(b)\}) \ ,  \ldots, \ (\varphi_{N}(K),\{\varphi_{N}(a),\varphi_{N}(b)\}),
\end{equation}
then we call
 \begin{equation}\label{eq:linear}
 \tilde {\mathcal {S}}=\{(C_{1},\{a_{1},b_{1}\}), \ldots,(C_{m},\{a_{m},b_{m}\}); \varphi_{1}, \varphi_{2}, \ldots, \varphi_{N}\}
 \end{equation}
a \emph{linear IFS with condensation}, and call $(K,\{a,b\})$ the \emph{attractor} of $\tilde {\mathcal{S}}$.
}\end{defi}

For an illustration, see Figure 1(a).

\begin{theo}\label{Main_result_2}
Let $(K,\{a,b\})$ be the attractor of a linear IFS with condensation defined in \eqref{eq:linear}. Then $K$ is a Whitney set provided that

$(\textrm{i})$ $A_{1}, A_{m+N}\in \{(C_{1},\{a_{1},b_{1}\}),\ldots ,(C_{m},\{a_{m},b_{m}\})\}$, where $A_{1}, A_{m+N}$ are defined in \eqref{d} and \eqref{eq:Aj};

$(\textrm{ii})$ $\dim_{H}E>1$, where $E$ is the attractor of the IFS $\{\varphi_{i}\}_{i=1}^{N}$.
\end{theo}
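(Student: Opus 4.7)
The plan is to parallel the proof of Theorem \ref{Main_result_1}. I will construct a continuous ``height function'' $f\colon K\to[0,1]$ with $f(a)=0$, $f(b)=1$ and $|f(x)-f(y)|=o(|x-y|)$ as $|x-y|\to 0$ on $K$, then invoke Whitney's extension theorem to produce a $C^1$ extension $F$ on the ambient $\mathbb{R}^s$ with $\nabla F|_K\equiv 0$. The new feature compared with Theorem \ref{Main_result_1} is that the sub-buttons $A_\ell$ are no longer required to be arcs, so no monotone parametrization of $K$ is available; instead I will make $f$ constant on every condensation sub-button and let all of the variation happen across the self-similar sub-buttons.

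Let $\tau$ denote the similarity dimension of $\{\varphi_i\}_{i=1}^{N}$, so $\sum_{i=1}^N\rho_i^\tau=1$; since $\dim_H E\le\tau$ always holds, hypothesis (ii) gives $\tau>1$. For each $A_\ell$ in \eqref{d}, set the weight $w_\ell=\rho_i^\tau$ if $A_\ell=\varphi_i(K)$ and $w_\ell=0$ if $A_\ell=C_j$, and set $\sigma_\ell=\sum_{k<\ell}w_k$. Define $f$ recursively by
$$
f\big(\varphi_i(y)\big)=\sigma_\ell+\rho_i^\tau f(y),\quad y\in K,\text{ when }A_\ell=\varphi_i(K),\qquad f\equiv\sigma_\ell\text{ on }A_\ell\text{ when }A_\ell=C_j.
$$
Using \eqref{a}, \eqref{b} and the identity $\sum_\ell w_\ell=1$, this fixed-point system admits a unique continuous solution $f$ on $K$ with $f(a)=0$ and $f(b)=1$. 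Continuity at each junction $b_\ell=a_{\ell+1}$ follows from the computation $f(b_\ell)=\sigma_\ell+w_\ell=\sigma_{\ell+1}=f(a_{\ell+1})$ in both cases of $w_\ell$, and $f$ is clearly nonconstant on $K$.

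For the Whitney estimate, given distinct $x,y\in K$ let $\boldsymbol{\omega}$ be the longest word with $\{x,y\}\subset\varphi_{\boldsymbol{\omega}}(K)$ (possibly the empty word $\varepsilon$). Pushing \eqref{d} through $\varphi_{\boldsymbol{\omega}}$, the pair $(x,y)$ either sits in a common condensation piece $\varphi_{\boldsymbol{\omega}}(C_j)$, where $f(x)=f(y)$, or in distinct sub-buttons of $\varphi_{\boldsymbol{\omega}}(K)$. In the second situation the recursion yields the oscillation bound $|f(x)-f(y)|\le\rho_{\boldsymbol{\omega}}^{\,\tau}$. For \emph{non-adjacent} sub-buttons, zipper axiom (iii) together with self-similarity yields a uniform lower bound $|x-y|\ge c\rho_{\boldsymbol{\omega}}$ with $c>0$ independent of $\boldsymbol{\omega}$, so
$$
\frac{|f(x)-f(y)|}{|x-y|}\le c^{-1}\rho_{\boldsymbol{\omega}}^{\,\tau-1}\longrightarrow 0\qquad\text{as }|x-y|\to 0,
$$
since $\tau>1$ forces $\rho_{\boldsymbol{\omega}}\to 0$. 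The case of \emph{adjacent} sub-buttons is handled by observing that, when one of the two adjacent pieces is a condensation piece (so $f$ is already constant on it), the difference $f(x)-f(y)$ is controlled by the diameter of a strictly smaller self-similar sub-piece containing $y$; recursively descending into that piece then reduces to the previous estimate at a smaller scale. Whitney's extension theorem with zero jet then produces the desired $C^1$ function $F$.

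The main obstacle is precisely the adjacent-sub-button case: when $x$ and $y$ lie in two sub-buttons sharing a junction point, the elementary separation $|x-y|\ge c\rho_{\boldsymbol{\omega}}$ fails near the junction, and one must exploit the constancy of $f$ on condensation pieces to confine the analysis to the self-similar side and iterate. Hypothesis (i)---that the two extremal sub-buttons $A_1$ and $A_{m+N}$ are condensation pieces---plays a crucial role here by preventing an infinite nested cascade of self-similar sub-buttons accumulating at the global endpoints $a$ or $b$, a configuration that would otherwise allow pairs $(x,y)$ to evade the inductive estimate. Once the adjacent-junction cases and the endpoint geometry are controlled, the remainder of the argument is a routine adaptation of the Whitney-extension step in the proof of Theorem \ref{Main_result_1}.
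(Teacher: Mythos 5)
Your proposal takes a genuinely different route from the paper's. You build $f$ by a self-similar recursion with weights $\rho_i^\tau$ (where $\tau$ is the similarity dimension of $\{\varphi_i\}$), making $f$ constant on each condensation sub-button and letting all of the variation happen inside the $\varphi_i(K)$. The paper instead sets $s=\dim_H E$ and defines $f(z)$ as the Hausdorff measure $\mathcal{H}^s(\widetilde{K}_z\cap E)$ of the initial segment of the chain, extending to non-node points by a limit over $k$-level nodes. Both definitions yield the same type of two-scale estimate $|f(z_1)-f(z_2)|\lesssim \rho_{\boldsymbol{\nu}}^{\dim-1}|z_1-z_2|$, and your weight-based construction is in some respects cleaner: it avoids needing $\mathcal{H}^s(E)$ to be positive and finite, and it works directly with $\tau\ge\dim_H E>1$.

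There are, however, two concrete gaps. First, the recursive definition $f(\varphi_i(y))=\sigma_\ell+\rho_i^\tau f(y)$ implicitly assumes that every self-similar sub-button is traversed in the ``forward'' orientation, i.e.\ that $a_\ell=\varphi_i(a)$ and $b_\ell=\varphi_i(b)$. The zipper decomposition in \eqref{d} is only a rearrangement, and the chain may just as well enter $\varphi_i(K)$ at $\varphi_i(b)$. In that case your junction computation $f(b_\ell)=\sigma_\ell+w_\ell=\sigma_{\ell+1}=f(a_{\ell+1})$ fails, since $f(a_\ell)=f(\varphi_i(b))=\sigma_\ell+\rho_i^\tau\ne\sigma_\ell$. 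The fix is routine (use $f(\varphi_i(y))=\sigma_\ell+\rho_i^\tau\bigl(1-f(y)\bigr)$ on reversed pieces) but it must be made.

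Second, and more importantly, the adjacent-sub-button case is not actually carried out. You gesture at ``recursively descending'' and correctly identify that hypothesis $(\textrm{i})$ prevents an infinite cascade, but you do not exhibit a terminating argument. The paper's Case 3.2 shows that a single further subdivision suffices: because $A_1$ and $A_{m+N}$ are condensation pieces, the point $\varphi_u(b)$ lies in $\varphi_u(A_{m+N})\subset\varphi_u(C_{j})$ for some $j$, and likewise $\varphi_{u'}(a)\in\varphi_{u'}(A_1)$. So either both $z_1,z_2$ fall into children on which $f$ is constant (giving $|f(z_1)-f(z_2)|=0$), or they fall into disjoint children $\varphi_{\boldsymbol{\nu}u}(A_{m+N})$ and $\varphi_{\boldsymbol{\nu}u'}(A_{p_2})$, whose separation is $\ge\xi_3\rho_{\boldsymbol{\nu}}$ for a finite positive constant $\xi_3$. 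Without isolating this one-step dichotomy, your ``iterate'' step is not a proof but only the correct heuristic; the lower bound $|z_1-z_2|\gtrsim\rho_{\boldsymbol{\nu}}$ must be produced at the level $\boldsymbol{\nu}$ where the oscillation bound $\rho_{\boldsymbol{\nu}}^\tau$ is established, and you have not shown this.
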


\begin{figure}[h]
\centering
    \begin{tikzpicture}[font=\tiny,scale=.84, declare function={xx(\t)=(2*\t*\t-10*\t+9)*(\t-2)*(\t-3)*.25*\l;
        yy(\t)=(6*\t*\t-44*\t+81)*(\t-1)*(\t-2)/6*\l/2;}]
        \pgfmathsetmacro{\r}{1.8}
        \pgfmathsetmacro{\l}{7.8}
        \pgfmathsetmacro{\h}{\l+1.2}
        \pgfmathsetmacro{\y}{.65}
        \coordinate (q1) at (1,.75);
        \coordinate (q2) at (.75,3.5);
        \coordinate (q3) at (5,.75);
        \coordinate (q4) at (5,4.75);
        \coordinate (qb1) at ($(q1)+(\r,\r)$);
        \coordinate (qb2) at ($(q2)+(2*\r,2*\r)$);
        \coordinate (qb3) at ($(q3)+(\r,\r)$);
        \coordinate (qb4) at ($(q4)+(\r,\r)$);

        \draw(\l,.5*\l)node[right]{$Q$};
        \draw(0,0)node[below left]{$a$}rectangle++(\l,\l) node[above right]{$b$};
        \foreach \i in {{0,0},{\l,\l}} \draw[fill=black](\i)circle(.05);
        \foreach \i/\j/\tex/\sty/\styl in {
        q1/\r/1/above left/right,
        q2/2*\r/2/below/above,
        q3/\r/3/below/above right,
        q4/\r/4/above right/above left}
            {\draw(\i)node[\sty]{$\varphi_\tex(a)$}rectangle++(\j,\j) node[\styl]{$\varphi_\tex(b)$};
             \draw($(\i)+(.5*\j,.5*\j)$)node{$Q_\tex$};
             \foreach \a in {{0,0},{\j,\j}}
                \draw[fill=black]($(\i)+(\a)$)circle(.05);
            }
        \foreach \i/\j/\k/\tex in {{0,0}/q1/{-.2,.1}/1, qb1/q2/{.1,.2}/2,qb2/q3/{.2,.2}/3,qb3/q4/{.2,.3}/4, qb4/{\l,\l}/{-.15,.15}/5}
            {\path[fill=red!50](\i)--($.5*(\i)+.5*(\j)+(\k)$)--(\j)-- ($.5*(\i)+.5*(\j)-(\k)$)--cycle;
            \draw($.5*(\i)+.5*(\j)$)node{$C_\tex$};}

        \draw(\h,0)rectangle++(\l,\l);
        \foreach \i/\sty/\tex in {{\l*.5,0}/below/a, {\l,\l*.5}/right/b}
            \draw($(\i)+(\h,0)$)node[\sty]{$\tex$};
        \foreach \i in {1,2,3,4}
            {\draw({xx(\i)+\y+\h},{yy(\i)+\y})rectangle++(\l/3,\l/3);
            \draw({xx(\i)+\y+\h+\l/6},{yy(\i)+\y+\l/6})node{$Q_\i$};}
        \foreach \i/\j/\loc in {1/2,4/4}
            \draw[red,fill=black]({\h+xx(\i)+\y},{yy(\i)+\l/6+\y})circle(.05)-- node[font=\tiny,above,text=black]{$C_\j$}++(-\l/6,0)circle(.05);
        \draw[red,fill=black]({\h+xx(2)+\y+\l/6},{yy(2)+\l/3+\y}) circle[radius=.05,black]--node[font=\tiny,right,text=black] {$C_3$}++(0,\l/6)circle[radius=.05,black];
        \foreach \i/\j/\loc/\sty/\styl in {1/1/{\h+\l/2,0}/above/black, 4/5/{\h+\l,\l/2}/below/near end}
            \draw[red,fill=black]({\h+xx(\i)+\y+\l/6},{yy(\i)+\y}) circle[radius=.05,black]--node[\sty,\styl,font=\tiny,text=black] {$C_\j$}(\loc)circle[radius=.05,black];
        \foreach \i/\tex in {0/(a) A variation of Whitney's example, \h/(b) Whitney's example}
        \draw(\i+\l/2,-1.2*\y)node[font=\normalsize]{\tex};
    %    \draw(\h+\l/2,0)--({xx(1)+\y+\l/6},.65);
    \end{tikzpicture}
\caption{}
\end{figure}

%\begin{figure}[h]
%\centering
%\subfigure[A variation of Whitney's example]{
%\includegraphics[scale=0.3]{tu1.png} \label{1}
%}
%\quad
%\subfigure[Whitney's example]{
%\includegraphics[scale=0.28]{tu2.png} \label{2}
%}
%\caption{}
%\end{figure}

We call $\varphi:\mathbb{R}^{2}\rightarrow\mathbb{R}^{2}$ a homotopy if $\varphi(x,y)=\rho(x,y)+(a_{1},a_{2})$ for some constant $\rho>0$ and vector $(a_{1},a_{2})$.

\begin{exam}\rm Let $Q=[0,1]^{2}$ be the unit square. Let $\{\varphi_{i}\}_{i=1}^{4}$ be a family of contractive homotopies such that the similarity dimension is greater than 1, and $\varphi_{i}(Q)=Q_i$ are disjoint squares in $Q$ (see Figure 1 (a)). Let $C_{j},1\leq j\leq 5$, be the rhombuses indicated by Figure 1 (a).
Denote by $K$ the attractor of the IFS with finite-component condensation $\mathcal{S} \! = \! \{\bigcup_{j=1}^{5}C_{j}; \varphi_{1},\ldots,  \varphi_{4}\}$.

Let $a \! = \! (0,0),b \! = \! (1,1)$. It is easy to prove
\begin{align}\label{e}
(K,\{a,b\})= \ &(C_{1},\{a,\varphi_{1}(a)\})+(\varphi_{1}(K),\{\varphi_{1}(a),\varphi_{1}(b)\}) \ + \notag\\ &\sum_{j=2}^{4}\left((C_{j},\{\varphi_{j-1}(b),\varphi_{j}(a)\}) +(\varphi_{j}(K),\{\varphi_{j}(a),\varphi_{j}(b)\})\right)+ (C_{5},\{\varphi_{4}(b),b\})
\end{align}
is a zipper decomposition of $(K,\{a,b\})$(see Figure 1 (a)). Therefore, $(K,\{a,b\})$ is the attractor of the linear IFS with condensation $\tilde {\mathcal {S}} \! = \! \{(C_{1},\{a,\varphi_{1}(a)\}), \ldots , (C_{5},\{\varphi_{4}(b),b\}); \varphi_{1}, \ldots,\varphi_{4}\}$.
Hence, $K$ is a Whitney set by Theorem \ref{Main_result_2}.
\end{exam}

\begin{rema}
\rm In the above example, if these rhombuses $C_{j}$ degenerate to line segments,      then
 we obtain Whitney's construction in \cite{Whitney_1935}.
\end{rema}

\section{Proof of Theorem \ref{Main_result_1}}

The following lemma is a simple version of the Whitney Extension Theorem.

\begin{lem}(\cite{Whitney_1934})\label{lem-extention} Suppose $E\subset\mathbb{R}^{s}$ is compact and $f:E\rightarrow\mathbb{R}$ is a real function. If for any $\epsilon> 0$, there exists $\delta> 0$ such that for any $x,y\in E$ with $0<\mid x-y \mid<\delta$, $$\frac{\mid f(x)-f(y) \mid}{\mid x-y \mid}<\epsilon,$$ then there exists a $C^{1}$ extension $F:\mathbb{R}^{s}\rightarrow\mathbb{R}$ such that $F|_{E}=f$ and $\nabla F|_{E}\equiv 0$.
\end{lem}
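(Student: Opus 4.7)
The plan is to invoke the classical Whitney cube decomposition of the open complement $U := \mathbb{R}^{s} \setminus E$ and build $F$ by a partition-of-unity formula. First I would decompose $U$ into a countable family of closed dyadic cubes $\{Q_{i}\}$ with pairwise disjoint interiors and side lengths $\ell_{i}$ satisfying $\ell_{i} \leq \operatorname{dist}(Q_{i}, E) \leq 4\sqrt{s}\,\ell_{i}$. Enlarging each $Q_{i}$ to a concentric cube $Q_{i}^{*}$ of side $\tfrac{6}{5}\ell_{i}$ gives a family of bounded overlap (at most $M=M(s)$ members meeting any point of $U$) that still avoids $E$, with $\operatorname{dist}(Q_{i}^{*}, E) \geq \tfrac{9}{10}\ell_{i}$. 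I would then choose a smooth partition of unity $\{\phi_{i}\}$ on $U$ subordinate to $\{Q_{i}^{*}\}$ with $\|\nabla \phi_{i}\|_{\infty} \leq C/\ell_{i}$, and for each $i$ pick a point $p_{i} \in E$ satisfying $|p_{i} - q| \leq C_{1} \ell_{i}$ for every $q \in Q_{i}^{*}$.

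The extension I would take is $F = f$ on $E$ and $F(y) = \sum_{i} f(p_{i})\,\phi_{i}(y)$ on $U$. On $U$ this is automatically $C^{\infty}$, so everything reduces to proving that $F$ is continuous, is differentiable with vanishing gradient, and has continuous gradient at each $x \in E$. Fix $x \in E$ and $\epsilon>0$, and let $\delta>0$ be as supplied by the hypothesis. For $y \in U$ with $|y-x|$ small, only those $i$ with $y \in Q_{i}^{*}$ contribute, and every such $i$ satisfies $\ell_{i} \leq \tfrac{10}{9}|y-x|$ because $\operatorname{dist}(Q_{i}^{*}, E) \leq |y-x|$. A routine triangle inequality then yields $|p_{i} - y| \leq C\ell_{i}$ and hence $|p_{i} - x| \leq C'|y-x|$, while for a nearest point $p \in E$ to $y$ one also gets $|p_{i} - p| \leq C''\ell_{i}$. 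Once $|y-x|$ is small enough that $C'|y-x|<\delta$ and $C''\ell_{i}<\delta$, the hypothesis delivers $|f(p_{i})-f(x)| \leq \epsilon C'|y-x|$ and $|f(p_{i})-f(p)| \leq \epsilon C''\ell_{i}$ for every contributing $i$.

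Using $\sum_{i} \phi_{i} \equiv 1$ on $U$ I would write $F(y) - F(x) = \sum_{i} (f(p_{i})-f(x))\,\phi_{i}(y)$ and conclude $|F(y)-F(x)| \leq MC' \epsilon\, |y-x|$, while the identity $\sum_{i} \nabla \phi_{i} \equiv 0$ on $U$ gives $\nabla F(y) = \sum_{i} (f(p_{i})-f(p))\,\nabla \phi_{i}(y)$, whence $|\nabla F(y)| \leq MCC'' \epsilon$. For $y \in E$ the analogous bounds follow directly from the hypothesis applied to $f$ and from $\nabla F(y) = 0$. Letting $\epsilon \to 0$ shows $F$ is continuous at $x$, is differentiable at $x$ with $\nabla F(x) = 0$, and that $\nabla F(y) \to 0$ as $y \to x$; combined with $F \in C^{\infty}(U)$, this gives $F \in C^{1}(\mathbb{R}^{s})$ with $F|_{E} = f$ and $\nabla F|_{E} \equiv 0$.

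The main obstacle is the gradient estimate, since the bound $|\nabla \phi_{i}| \leq C/\ell_{i}$ blows up when $\ell_{i}$ is small, so that the naive choice of subtracting $f(x)$ gives only $|\nabla F(y)| \lesssim \epsilon\, |y-x|/\ell_{i}$, which is not small. The crucial trick is instead to subtract $f(p)$ for a closest point $p \in E$ to $y$; this is legal because $\sum_{i} \nabla \phi_{i}(y)=0$ kills any additive constant, and the improved bound $|p_{i} - p| \leq C''\ell_{i}$, proportional to $\ell_{i}$ itself rather than $|y-x|$, exactly cancels the $1/\ell_{i}$ singularity and yields a uniform-in-$y$ bound of order $\epsilon$.
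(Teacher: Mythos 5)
The paper does not prove this lemma at all: it is quoted from Whitney's 1934 extension theorem, and your argument is precisely the classical proof of that theorem specialized to first-order data with all prescribed derivatives zero --- Whitney cube decomposition of $U=\mathbb{R}^{s}\setminus E$, a subordinate partition of unity with $\|\nabla\phi_{i}\|_{\infty}\leq C/\ell_{i}$, the extension $F=\sum_{i}f(p_{i})\phi_{i}$ on $U$, and the key cancellation $\sum_{i}\nabla\phi_{i}\equiv 0$ that lets you subtract $f(p)$ at a nearest point $p\in E$ and cancel the $\ell_{i}^{-1}$ blow-up. Your proof is correct: the value estimate gives $F(y)-F(x)=o(|y-x|)$ at each $x\in E$ (hence differentiability with $\nabla F(x)=0$), and the gradient estimate plus bounded overlap gives $\nabla F(y)\to 0$ as $y\to x$, which together with smoothness on $U$ yields $F\in C^{1}$. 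The only cosmetic caveat is that some of your stated constants (e.g. $\operatorname{dist}(Q_{i}^{*},E)\geq\tfrac{9}{10}\ell_{i}$ phrased with side lengths rather than diameters) are dimension-dependent as written; with the standard diameter-normalized Whitney decomposition the same estimates go through verbatim, so this does not affect the argument.
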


Let $A,B\subset\mathbb{R}^{s}$, we denote the \emph{distance} between $A$ and $B$ by $$dist(A,B)=\inf \ \{ \ \mid x-y \mid \ : x\in A , y\in B\}.$$

%\noindent\textit{Proof of Theorem 1.1.}
\begin{proof}[\textbf{Proof of Theorem \ref{Main_result_1}}]\ Let $a,b$ be the endpoints of the arc $K$, which we will denote by $\Gamma_{\widehat{ab}}$.
For $z_{1},z_{2}\in\Gamma_{\widehat{ab}}$, we denote by $\Gamma_{\widehat{z_{1}z_{2}}}$ the subarc of $\Gamma_{\widehat{ab}}$ between $z_{1}$ and $z_{2}$. Denote $s=\dim_{H}E$. We define a function $f:\Gamma_{\widehat{ab}}\longrightarrow \mathbb{R}$ by $$f(z)=\mathcal{H}^{s}(\Gamma_{\widehat{az}}\cap E),\ z\in\Gamma_{\widehat{ab}}.$$

To apply Lemma \ref{lem-extention}, we need to estimate
$$\frac{\mid f(z_{1})-f(z_{2}) \mid}{\mid z_{1}-z_{2} \mid}=\frac{\mathcal{H}^{s}(\Gamma_{\widehat{z_{1}z_{2}}}\cap E)}{\mid z_{1}-z_{2} \mid} \quad \text{for any}~ z_{1},z_{2}\in \Gamma_{\widehat{ab}}.$$

Pick $z_{1},z_{2}\in \Gamma_{\widehat{ab}}$ with $z_{1}\neq z_{2}$. Let $\boldsymbol{\nu}=\nu_{1}\nu_{2}\cdots\nu_{k_{0}}\in \{1,2,\ldots, N\}^{k_{0}}$ be the longest word such that $\varphi_{\boldsymbol{\nu}}(\Gamma_{\widehat{ab}})$ contain both $z_{1}$ and $z_{2}$ (let $\boldsymbol{\nu}=\varepsilon$ and
$\varphi_{\boldsymbol{\nu}}=id$
if $z_1\in \gamma_{\ell_1}$ and $z_2\in \gamma_{\ell_2}$ with $\ell_1\neq \ell_2\in\{1,\dots,m+N\}$
or $\gamma_{\ell_1}=\gamma_{\ell_2}\in\{C_1,\dots,C_m\}$).
Denote by $\rho_{\boldsymbol{\nu}}$ the contraction ratio of $\varphi_{\boldsymbol{\nu}}$.
Since $\Gamma_{\widehat{ab}}$ can be decomposed into
\begin{equation}\label{f}
\gamma_{1} + \gamma_{2} + \cdots + \gamma_{m+N},
\end{equation}
where $\gamma_{1},\gamma_{2},\ldots,\gamma_{m+N}$ is a rearrangement of $C_{1},\ldots,C_{m},\varphi_{1}(\Gamma_{\widehat{ab}}),\ldots,\varphi_{N}(\Gamma_{\widehat{ab}})$, we have $z_{1} \! \in \! \varphi_{\boldsymbol{\nu}}(\gamma_{\ell_{1}})$ and $z_{2} \! \in \! \varphi_{\boldsymbol{\nu}}(\gamma_{\ell_{2}})$ for some $\ell_{1}, \ell_{2} \! \in \! \{1,\ldots,m+N\}$.

Case 1: $\gamma_{\ell_{1}}=\gamma_{\ell_{2}}$.

$\gamma_{\ell_{1}}=\gamma_{\ell_{2}}$ can not both fall into the set $\{\varphi_{1}(\Gamma_{\widehat{ab}}),\ldots,\varphi_{N}(\Gamma_{\widehat{ab}})\}$ by the choice of $\boldsymbol{\nu}$, thus $\gamma_{\ell_{1}} \! = \! \gamma_{\ell_{2}} \! \in \! \{C_{1},\ldots,C_{m}\}$, hence
\begin{equation}\label{g}
\mathcal{H}^{s}(\Gamma_{\widehat{z_{1}z_{2}}}\cap E) \! = \! 0.
\end{equation}

Case 2: $\gamma_{\ell_{1}}\neq\gamma_{\ell_{2}}\in\{C_{1},\ldots,C_{m}\}$.

Set $\xi_{1}:=\underset{j'\neq j''}{\min} \{dist(C_{j'},C_{j''})\}$,
we have $\xi_{1}>0$ for $C_j, 1\leq j \leq m$ are pairwise disjoint.
Hence,
\begin{equation}\label{h}
\frac{\mid f(z_{1})-f(z_{2}) \mid}{\mid z_{1}-z_{2} \mid}\leq\frac{\mathcal{H}^{s}(\varphi_{\boldsymbol{\nu}}(\Gamma_{\widehat{ab}})\cap E)}{\rho_{\boldsymbol{\nu}} \ \xi_{1}}=\frac{\mathcal{H}^{s}(\varphi_{\boldsymbol{\nu}}(E))}{\rho_{\boldsymbol{\nu}} \ \xi_{1}}= \frac{(\rho_{\boldsymbol{\nu}})^{s} \ \mathcal{H}^{s}(E)}{\rho_{\boldsymbol{\nu}} \ \xi_{1}}= M_{1} (\rho_{\boldsymbol{\nu}})^{s-1},
\end{equation}
where the first inequality holds by $\Gamma_{\widehat{z_{1}z_{2}}}\subseteq\varphi_{\boldsymbol{\nu}}(\Gamma_{\widehat{ab}})$, and $M_{1}=\frac{\mathcal{H}^{s}(E)}{\xi_{1}}$.

Case 3: $\gamma_{\ell_{1}}\neq\gamma_{\ell_{2}}$ and at least one of them belongs to $\{\varphi_{1}(\Gamma_{\widehat{ab}}),\ldots,\varphi_{N}(\Gamma_{\widehat{ab}})\}$.

Without loss of generality, we assume that $\gamma_{\ell_{1}}=\varphi_{u}(\Gamma_{\widehat{ab}}), u\in\{1,2,\ldots, N\}$.
Since $\Gamma_{\widehat{ab}}$ can be decomposed into \eqref{f}, we have $z_{1}\in\varphi_{\boldsymbol{\nu}u}(\gamma_{p_{1}})$ for some $p_{1}\in\{1,\ldots,m+N\}$.

First, we consider the case $\ell_{2}>\ell_{1}$.

Case 3.1: If $\ell_{2}-\ell_{1}>1$ or $\gamma_{p_{1}}\neq \gamma_{m+N}$, then $\varphi_{\boldsymbol{\nu}u}(\gamma_{p_{1}}) \bigcap \varphi_{\boldsymbol{\nu}}(\gamma_{\ell_{2}})=\emptyset$.
Set $$\xi_{2}=\min\limits_{\substack{\ell',\ell''\in\{1,\ldots,m+N\}\\  u\in\{1,2,\ldots,N\}}}\{dist\big(\varphi_{u}(\gamma_{\ell'}),\gamma_{\ell''}\big); \varphi_{u}(\gamma_{\ell'})\bigcap\gamma_{\ell''} )=\emptyset\},$$
 we have $\xi_2>0$ by finite choices property.
Then \begin{equation}\label{i}\frac{\mid f(z_{1})-f(z_{2}) \mid}{\mid z_{1}-z_{2} \mid}\leq\frac{\mathcal{H}^{s}(\varphi_{\boldsymbol{\nu}}(\Gamma_{\widehat{ab}})\cap E)}{\rho_{\boldsymbol{\nu}} \ \xi_{2}}=\frac{\mathcal{H}^{s}(\varphi_{\boldsymbol{\nu}}(E))}{\rho_{\boldsymbol{\nu}} \ \xi_{2}}= \frac{(\rho_{\boldsymbol{\nu}})^{s} \ \mathcal{H}^{s}(E)}{\rho_{\boldsymbol{\nu}} \ \xi_{2}}= M_{2} (\rho_{\boldsymbol{\nu}})^{s-1},\end{equation} where $M_{2}=\frac{\mathcal{H}^{s}(E)}{\xi_{2}}$.

Case 3.2: If $\ell_{2}-\ell_{1}=1$ and $\gamma_{p_{1}}=\gamma_{m+N}$, then the arc $\varphi_{\boldsymbol{\nu}u}(\gamma_{m+N})$ containing $z_1$ is adjacent to the arc $\varphi_{\boldsymbol{\nu}}(\gamma_{\ell_{2}})$ containing $z_2$.
If $z_2\in\varphi_{\boldsymbol{\nu}}(\gamma_{\ell_{2}})=\varphi_{\boldsymbol{\nu}}(C_j)$ for some $j\in\{1,\dots,m\}$ or
$z_2\in \varphi_{\boldsymbol{\nu}u'}(\gamma_1)$ for some $u'\in \{1,\dots,N\}$, then
\begin{equation}\label{k}
\mathcal{H}^{s}(\Gamma_{\widehat{z_{1}z_{2}}}\cap E)=0.
\end{equation}
Otherwise, $z_2\in \varphi_{\boldsymbol{\nu}u'}(\gamma_{p_2})$ for some $u'\in \{1,\dots,N\}$ and $p_2\in\{2,\dots,m+N\},$
 then $\varphi_{\boldsymbol{\nu}u}(\gamma_{m+N}) \bigcap \varphi_{\boldsymbol{\nu}u'}(\gamma_{p_{2}})\\ =\emptyset$. Set $$\xi_{3}=\min\limits_{\substack{\ell\in\{2,\ldots,m+N\}\\  u,u'\in\{1,2,\ldots,N\}}}\{dist\big(\varphi_{u}(\gamma_{m+N}),\varphi_{u'}(\gamma_{\ell})\big); \varphi_{u}(\gamma_{m+N})\cap\varphi_{u'}(\gamma_{\ell})=\emptyset\},$$
  we have $\xi_{3}>0$. Then
  \begin{equation}\label{l}\frac{\mid f(z_{1})-f(z_{2}) \mid}{\mid z_{1}-z_{2} \mid}\leq\frac{\mathcal{H}^{s}(\varphi_{\boldsymbol{\nu}}(\Gamma_{\widehat{ab}})\cap E)}{\rho_{\boldsymbol{\nu}} \ \xi_{3}}=\frac{\mathcal{H}^{s}(\varphi_{\boldsymbol{\nu}}(E))}{\rho_{\boldsymbol{\nu}} \ \xi_{3}}= \frac{(\rho_{\boldsymbol{\nu}})^{s} \ \mathcal{H}^{s}(E)}{\rho_{\boldsymbol{\nu}} \ \xi_{3}}= M_{3} (\rho_{\boldsymbol{\nu}})^{s-1},\end{equation} where $M_{3}=\frac{\mathcal{H}^{s}(E)}{\xi_{3}}$.

By symmetry, the case $\ell_{2} \! < \! \ell_{1}$ is similar as above.

Let $|z_{1}-z_{2}| \! \rightarrow \! 0$, then $|\boldsymbol{\nu}| \! \rightarrow \! \infty$ which lead to $\rho_{\boldsymbol{\nu}} \! \rightarrow \! 0$, it follows that $(\rho_{\boldsymbol{\nu}})^{s-1} \! \rightarrow \! 0$ by $s \! > \! 1$. Summing up with \eqref{g}-\eqref{l}, $| f(z_{1})-f(z_{2})| \! = \! o(| z_{1}-z_{2}|)$ holds for any $z_{1},z_{2}\in\Gamma_{\widehat{ab}}$, then $\Gamma_{\widehat{ab}}$ is a Whitney set by Lemma \ref{lem-extention}.
The theorem is proved.
\end{proof}

\section{Proof of Corollary \ref{Corollary_1}}

\begin{proof}[\textbf{Proof of Corollary \ref{Corollary_1}}]
Suppose that $\gamma$ is a self-similar arc generated by a family of contractive similitudes $\{S_{i}\}_{i=1}^{n}$
with contraction ratios $\{\rho_{i}\}_{i=1}^{n}$.
Then $\gamma$ can be decomposed into subarcs as \begin{equation}\label{m}
\gamma=\bigcup_{\boldsymbol{\omega}\in\{1,\ldots,n\}^{k}}S_{\boldsymbol{\omega}}(\gamma)=\gamma_{1}+\gamma_{2}+\cdots+\gamma_{n^{k}},\quad k\geq 0.
\end{equation}

Next, we construct an IFS with condensation $\mathcal{S}$ such that $\gamma$ is the attractor of it
and satisfies Theorem \ref{Main_result_1}.

Fix $k \! \geq \! 1$. For any $\ell \! \in \! \{1,2,\ldots,n^{k}\}$, denote by $\boldsymbol{\omega}_{(\ell)}$ the unique word in $\{1,\ldots,n\}^{k}$ such that $\gamma_{\ell} \! = \! S_{\boldsymbol{\omega}_{(\ell)}}(\gamma)$. Then $\gamma$ is the attractor of the IFS with condensation $\mathcal{S}_{k} \! = \! \{\gamma_{1}\bigcup \gamma_{n^{k}}; S_{\boldsymbol{\omega}_{(2)}}, S_{\boldsymbol{\omega}_{(3)}}, \ldots, S_{\boldsymbol{\omega}_{(n^{k}-1)}}\}$ satisfying Theorem \ref{Main_result_1} $(\textrm{i})$ $(\textrm{ii})$.

Let $E_{k}$ be the attractor of the IFS $\{S_{\boldsymbol{\omega}_{(\ell)}}\}_{\ell=2}^{n^{k}-1}$. It will suffice to prove $\dim_{H}E_{k}> 1$ for some integer $k>0$. Denote by $s$ the similarity dimension of $\{S_{i}\}_{i=1}^{n}$, that is $\sum_{i=1}^{n}\rho_{i}^{s}=1$, then we have $\sum_{\ell=1}^{n^{k}}\rho_{\boldsymbol{\omega}_{(\ell)}}^{s}=1$. Since $\dim_{H}\gamma > 1$, then $s=\dim_{s}\gamma\geq\dim_{H}\gamma > 1$. Clearly, there is a constant $N_{1}>0$, such that $\rho_{\boldsymbol{\omega}_{(1)}}^{s}+\rho_{\boldsymbol{\omega}_{(n^{k})}}^{s}<\frac{1}{2}$ when $k\geq N_{1}$.

Denote $\rho^{\ast}=\max_{1\leq i\leq n}\{\rho_{i}\}$. Let $t'\in(1,s)$. For $k\geq N_{1}$, we have $$\sum_{\ell=2}^{n^{k}-1}\rho_{\boldsymbol{\omega}_{(\ell)}}^{t'}\geq\frac{\sum_{\ell=2}^{n^{k}-1}\rho_{\boldsymbol{\omega}_{(\ell)}}^{s}}{(\rho^{\ast})^{k(s-t')}} \geq\frac{1-\frac{1}{2}}{(\rho^{\ast})^{k(s-t')}}.$$
Since $(\rho^{\ast})^{k(s-t')}\rightarrow 0$ as $k\rightarrow \infty$, we have $\sum_{\ell=2}^{n^{k}-1}\rho_{\boldsymbol{\omega}_{(\ell)}}^{t'}>1$ for $k$ large enough. Thus there exist $t\in(t',s)$, such that $\sum_{\ell=2}^{n^{k}-1}\rho_{\boldsymbol{\omega}_{(\ell)}}^{t}=1$ for $k$ large enough, then $\dim_{s}E_{k}=t>t'>1$. Since $E_{k}$ satisfies the strong separation condition, we have $\dim_{H}E_{k}=\dim_{s}E_{k}>1$ for $k$ large enough.

Finally, $\gamma$ is a Whitney set by Theorem \ref{Main_result_1}.
\end{proof}

By the arbitrary of $t'<s$ in the above proof, we have $\dim_{s}\gamma\leq\dim_{H}\gamma$ for arbitrary self-similar arc $\gamma$. Then we have the following corollary even if $\gamma$ does not satisfy the open set condition.

\begin{coro}
If $\gamma$ is a self-similar arc, then $\dim_{s}\gamma=\dim_{H}\gamma$.
\end{coro}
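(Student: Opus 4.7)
The plan is to obtain the equality by pairing the standard inequality $\dim_H\gamma \le \dim_s\gamma$ with the reverse inequality $\dim_s\gamma \le \dim_H\gamma$ that I extract from the argument already given in the proof of Corollary \ref{Corollary_1}.

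Set $s = \dim_s\gamma$, so $\sum_{i=1}^n\rho_i^s = 1$. The bound $\dim_H\gamma \le s$ is standard: the cylinder sets $\{S_{\boldsymbol{\omega}}(\gamma) : \boldsymbol{\omega}\in\{1,\ldots,n\}^k\}$ form an $s$-summable cover of $\gamma$ whose mesh tends to zero with $k$, giving $\mathcal{H}^s(\gamma) < \infty$. No separation is used here.

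For the reverse direction, I first dispense with the easy case: since $\gamma$ is an arc, $\dim_H\gamma \ge 1$ automatically, so if $s \le 1$ the inequality $s \le \dim_H\gamma$ is immediate. Assume now $s > 1$. I would then replay the argument in the proof of Corollary \ref{Corollary_1} without assuming $\dim_H\gamma > 1$: for any fixed $t' \in (1,s)$, decompose $\gamma = \gamma_1 + \cdots + \gamma_{n^k}$ as in \eqref{m} and form the sub-IFS $\{S_{\boldsymbol{\omega}_{(\ell)}}\}_{\ell=2}^{n^k-1}$ with attractor $E_k$. The same estimate already used shows $\sum_{\ell=2}^{n^k-1}\rho_{\boldsymbol{\omega}_{(\ell)}}^{t'} > 1$ for $k$ large, hence $\dim_s E_k > t'$. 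Since removing $\gamma_1$ and $\gamma_{n^k}$ ensures that the common endpoints $S_{\boldsymbol{\omega}_{(\ell)}}(b) = S_{\boldsymbol{\omega}_{(\ell+1)}}(a)$ between adjacent pieces never lie in $E_k$ (because $a\in\gamma_1$ and $b\in\gamma_{n^k}$ are excised at every step of the iteration), $E_k$ satisfies the strong separation condition, so $\dim_H E_k = \dim_s E_k > t'$. As $E_k \subset \gamma$, we obtain $\dim_H\gamma \ge t'$; letting $t' \uparrow s$ yields $\dim_H\gamma \ge s$.

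The one step deserving care is the verification of strong separation for $E_k$ just described, which is the reason the two end pieces $\gamma_1, \gamma_{n^k}$ are thrown away. Everything else is verbatim from the proof of Corollary \ref{Corollary_1}, so once that observation is in place the proof is finished by combining the two inequalities.
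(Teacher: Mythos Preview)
Your proof is correct and follows essentially the same approach as the paper, which simply notes that the arbitrariness of $t'<s$ in the proof of Corollary~\ref{Corollary_1} yields $\dim_s\gamma\le\dim_H\gamma$. You add two details the paper leaves implicit: the disposal of the trivial case $s\le 1$ (using $\dim_H\gamma\ge 1$ for any arc), and the explicit reason why $E_k$ satisfies the strong separation condition (namely that $a,b\notin E_k$ once $\gamma_1,\gamma_{n^k}$ are removed, so the shared endpoints of adjacent cylinders are absent from the images $S_{\boldsymbol{\omega}_{(\ell)}}(E_k)$); both additions are welcome but do not change the argument.
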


\section{Proof of Theorem \ref{Main_result_2}}

%First, we give some notations.
Suppose a button $(A,\{a,b\})$ has a zipper  decomposition as (\ref{c}),
that is $(A,\{a,b\}) \! = \! (A_{1},\{a_{1},b_{1}\})+(A_{2},\{a_{2},b_{2}\})+\cdots+(A_{m},\{a_{m},b_{m}\})$.
For any $\ell\in\{1,2,\ldots,m\}$, we define
\begin{equation}\label{eq:tail}
(\widetilde{A}_{b_{\ell}},\{a,b_{\ell}\})=\left (\bigcup_{j=1}^\ell A_{k},\{a  , b_{\ell}\}\right).
\end{equation}
For $1\leq \ell \leq m-1$,  we have
$(\widetilde{A}_{a_{\ell+1}},\{a,a_{\ell+1}\})=(\widetilde{A}_{b_{\ell}},\{a,b_{\ell}\})$
by $a_{\ell+1}=b_\ell$.
At the same time, we set $(\widetilde{A}_{a_1},\{a,a_1\})=\emptyset$.

In this section, we always assume that $(K,\{a,b\})$ is the attractor of a linear IFS with condensation $\tilde {\mathcal{S}} \! = \! \{(C_{1},\{a_{1},b_{1}\}), \ldots ,(C_{m},\{a_{m},b_{m}\}); \varphi_{1}, \varphi_{2}, \ldots, \varphi_{N}\}$. For $k\geq1$, we call %$$\bigcup_{j=1}^m\{a_j,b_j\},\bigcup_{i=1}^{N}\{\varphi_{i}(a),\varphi_{i}(b)\}$$
%the \emph{1-level node}  inductively, call
$$\bigcup_{j=1}^m\{\varphi_{\boldsymbol{\omega}}(a_j),\varphi_{\boldsymbol{\omega}}(b_j)\}_{|\boldsymbol{\omega}|=k-1}, \\ \{\varphi_{\boldsymbol{\omega}}(a),\varphi_{\boldsymbol{\omega}}(b)\}_{|\boldsymbol{\omega}|=k}$$
the \emph{k-level node} of $(K,\{a,b\})$, where $\varphi_{\boldsymbol{\omega}}=id$ if $|\boldsymbol{\omega}|=0$.

To characterize the connectedness of the attractor $K$,
we define the \emph{Hata graph} of $ \tilde {\mathcal{S}}$ according to \cite{Ma_2020}.
Let $\{\varphi_{i}(K): 1\leq i\leq N\}\cup\{C_{1},\ldots,C_{m}\}$ be the vertex set; for its any two vertices
$U$ and $V$, there is an edge if and only if $U\cap V\neq\emptyset$. This Hata graph is said to be connected if for any two vertices, there is a path in the graph connecting them.
Ma and Zhang \cite{Ma_2020} proved that $K$ is connected if the Hata graph of $\tilde {\mathcal{S}}$ is connected.

\begin{lem}\label{lem-connect} Let $(K,\{a,b\})$ be the attractor of a linear IFS with condensation $\tilde {\mathcal{S}}$ defined in \eqref{eq:linear}. Then $K$ is connected.
\end{lem}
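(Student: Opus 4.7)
The plan is to invoke the Hata graph criterion of Ma--Zhang already cited just before the statement: it suffices to show that the Hata graph of $\tilde{\mathcal{S}}$ is connected.

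Recall that the vertex set of the Hata graph is $\{\varphi_1(K),\ldots,\varphi_N(K)\}\cup\{C_1,\ldots,C_m\}$, and two vertices are joined by an edge exactly when the corresponding compact sets have nonempty intersection. The hypothesis gives a zipper decomposition
\[
(K,\{a,b\})=A_{1}+A_{2}+\cdots+A_{m+N},
\]
where $A_{1},\ldots,A_{m+N}$ is a rearrangement of the list \eqref{eq:Aj}. In particular, each of the $m+N$ vertices of the Hata graph occurs exactly once as some $A_\ell$.

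From condition (ii) in the definition of a zipper decomposition, consecutive pieces satisfy $b_{\ell}=a_{\ell+1}$ for every $1\leq \ell\leq m+N-1$, so the underlying compact sets of $A_\ell$ and $A_{\ell+1}$ share at least this one point and hence intersect. Therefore, in the Hata graph there is an edge between the vertices underlying $A_\ell$ and $A_{\ell+1}$ for every such $\ell$. Stringing these edges together yields a Hamiltonian path
\[
A_{1}-A_{2}-\cdots-A_{m+N}
\]
through every vertex of the Hata graph, so the graph is connected.

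By the Ma--Zhang result cited above (\cite{Ma_2020}), connectedness of the Hata graph of $\tilde{\mathcal{S}}$ implies connectedness of the attractor $K$. Since the argument reduces entirely to unpacking the zipper definition and invoking an existing theorem, there is no genuine obstacle; the only thing to be careful about is noting that the rearrangement in \eqref{d} really does exhaust each of the $m+N$ Hata vertices exactly once, so that the chain of edges from the zipper covers every vertex rather than merely some of them.
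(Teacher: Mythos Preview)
Your proof is correct and follows exactly the same approach as the paper: show that the Hata graph of $\tilde{\mathcal S}$ is connected using condition~(ii) of the zipper decomposition, then invoke the Ma--Zhang criterion. The paper's own proof is a two-line sketch of precisely this argument, and your version simply spells out the details (consecutive pieces share an endpoint, the rearrangement exhausts all vertices) that the paper leaves implicit.
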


\begin{proof}
\rm Since $(K,\{a,b\})$ has a zipper decomposition as \eqref{d} and satisfy its condition (ii),
the Hata graph of $\tilde {\mathcal{S}}$ is connected,
so $K$ is connected by \cite{Ma_2020}.
\end{proof}

\begin{proof}[\textbf{Proof of Theorem \ref{Main_result_2}}]
By Lemma \ref{lem-connect}, $K$ is a compact connected set. Let $s=\dim_{H}E$. To apply Lemma \ref{lem-extention}, we firstly construct a function $f:(K,\{a,b\})\longrightarrow \mathbb{R}$ as following: let $z \! \in \! (K,\{a,b\})$, if $z$ is a node of $(K,\{a,b\})$, we set \begin{equation}\label{n}f(z) \! = \! \mathcal{H}^{s}\left((\widetilde{K}_{z},\{a,z\})\cap E\right);\end{equation} if $z\in(\varphi_{\boldsymbol{\omega}}(C_{j}),\{\varphi_{\boldsymbol{\omega}}(a_{j}), \varphi_{\boldsymbol{\omega}}(b_{j})\})$ for some $|\boldsymbol{\omega}|\geq 0$ and $j\in\{1,\ldots,m\}$, we set
\begin{equation}
\label{o}f(z)=\mathcal{H}^{s}\left((\widetilde{K}_{\varphi_{\boldsymbol{\omega}}(a_{j})},\{a,\varphi_{\boldsymbol{\omega}}(a_{j})\})\cap E\right);
\end{equation}
otherwise, by \eqref{b} there exist a sequence of nodes $\{z_{k}\}_{k=1}^\infty$ such that $z_{k}\rightarrow z$ as $k\rightarrow\infty$,
where $z_{k}$ is a $k$-level node, we set
\begin{equation}
\label{p}f(z)=\lim\limits_{k\rightarrow\infty}\mathcal{H}^{s}\left((\widetilde{K}_{z_{k}},\{a,z_{k}\})\cap E\right).
\end{equation}

Next, we estimate $\frac{\mid f(z_{1})-f(z_{2}) \mid}{\mid z_{1}-z_{2} \mid}$ for any $z_{1},z_{2}\in (K,\{a,b\})$, where the argument is very similar to the proof of Theorem \ref{Main_result_1}.

Pick $z_{1}\neq z_{2}\in (K,\{a,b\})$. Let $\boldsymbol{\nu}=\nu_{1}\nu_{2}\cdots\nu_{k_{0}}\in \{1,2,\ldots, N\}^{k_{0}}$ be the longest word such that $\varphi_{\boldsymbol{\nu}}(K)$ contain both $z_{1}$ and $z_{2}$. Denote by $\rho_{\boldsymbol{\nu}}$ the contraction ratio of $\varphi_{\boldsymbol{\nu}}$.
Since $(K,\{a,b\})$ has a zipper decomposition as \eqref{d}, we have $z_{1} \! \in \! \varphi_{\boldsymbol{\nu}}(A_{\ell_{1}})$ and $z_{2} \! \in \! \varphi_{\boldsymbol{\nu}}(A_{\ell_{2}})$ for some $\ell_{1}, \ell_{2} \! \in \! \{1,\ldots,m+N\}$.

Case 1: $A_{\ell_{1}}=A_{\ell_{2}}$.

$A_{\ell_{1}}=A_{\ell_{2}}$ can not both fall into the set $\{\varphi_i(K);1\leq i\leq N\}$, thus $A_{\ell_{1}}=A_{\ell_{2}}\in\bigcup_{i=1}^m\{\big(C_i,\{a_i,b_i\}\big)\}$ by the choice of $\boldsymbol{\nu}$, by \eqref{o}, we have \begin{equation}\label{q} \mid f(z_{1})-f(z_{2}) \mid=0.\end{equation}

Case 2: $A_{\ell_{1}}\neq A_{\ell_{2}}\in\bigcup_{i=1}^m\{\big(C_i,\{a_i,b_i\}\big)\}$.

By \eqref{o}, we have \begin{equation}\label{r}
\frac{\mid f(z_{1})-f(z_{2}) \mid}{\mid z_{1}-z_{2} \mid}
\leq\frac{\mathcal{H}^{s}(\varphi_{\boldsymbol{\nu}}(K)\bigcap E)}{\rho_{\boldsymbol{\nu}} \ \xi_{1}}
=\frac{\mathcal{H}^{s}(\varphi_{\boldsymbol{\nu}}(E))}{\rho_{\boldsymbol{\nu}} \ \xi_{1}}
= M_{1} (\rho_{\boldsymbol{\nu}})^{s-1},
\end{equation}
where $\xi_{1}=\underset{j'\neq j''}{\min} \{dist(C_{j'},C_{j''})\}$ and $M_{1}=\frac{\mathcal{H}^{s}(E)}{\xi_{1}}$.

Case 3: $A_{\ell_{1}}\neq A_{\ell_{2}}$ and at least one of them belongs to $\{(\varphi_{i}(K),\{\varphi_{i}(a),\varphi_{i}(b)\});1\leq i\leq N\}$.

Without loss of generality, we assume that $A_{\ell_{1}}=(\varphi_{u}(K),\{\varphi_{u}(a),\varphi_{u}(b)\}), ~u\in\{1,2,\ldots, N\}$. Since $(K,\{a,b\})$ has a zipper decomposition as \eqref{d}, we have $z_{1}\in\varphi_{\boldsymbol{\nu}u}(A_{p_{1}})$ for some $p_{1}\in\{1,\ldots,m+N\}$.

First, we consider the case $\ell_{2}>\ell_{1}$.

Case 3.1: If $\ell_{2}-\ell_{1}>1$ or $A_{p_1}\neq A_{m+N}$, by the definition of $f$ and $z_{1},z_{2}\in\varphi_{\boldsymbol{\nu}}(K)$, we have \begin{equation}\label{s}
\frac{\mid f(z_{1})-f(z_{2}) \mid}{\mid z_{1}-z_{2} \mid}
\leq\frac{\mathcal{H}^{s}(\varphi_{\boldsymbol{\nu}}(K)\bigcap E)}{\rho_{\boldsymbol{\nu}} \ \xi_2}
=\frac{\mathcal{H}^{s}(\varphi_{\boldsymbol{\nu}}(E))}{\rho_{\boldsymbol{\nu}} \ \xi_2}= M_2 (\rho_{\boldsymbol{\nu}})^{s-1},
\end{equation}
where $\xi_{2}=\min\limits_{\substack{\ell',\ell''\in\{1,\ldots,m+N\}\\  u\in\{1,2,\ldots,N\}}}\{dist(\varphi_{u}(A_{\ell'}), A_{\ell''}\big); \varphi_{u}(A_{\ell'})\bigcap A_{\ell''}=\emptyset\}$ and $M_{2}=\frac{\mathcal{H}^{s}(E)}{\xi_{2}}$.

Case 3.2: $\ell_{2}-\ell_{1}=1$ and $A_{p_1}=A_{m+N}$, then the button $\varphi_{\boldsymbol{\nu}u}(A_{m+N})$ containing $z_1$
is adjacent to the button $\varphi_{\boldsymbol{\nu}}(A_{\ell_{2}})$ containing $z_2$.
If $z_2\in\varphi_{\boldsymbol{\nu}}(A_{\ell_{2}})=\varphi_{\boldsymbol{\nu}}(C_j)$ for some $j\in\{1,\dots,m\}$ or
$z_2\in \varphi_{\boldsymbol{\nu}u'}(A_1)$ for some $u'\in \{1,\dots,N\}$, then
\begin{equation}\label{t}
\mid f(z_{1})-f(z_{2}) \mid=0.
\end{equation}
Otherwise, $z_2\in \varphi_{\boldsymbol{\nu}u'}(A_{p_2})$ for some $u'\in \{1,\dots,N\}$ and $p_2\in\{2,\dots,m+N\},$
 then $\varphi_{\boldsymbol{\nu}u}(A_{m+N}) \bigcap \varphi_{\boldsymbol{\nu}u'}(A_{p_{2}})\\ =\emptyset$,
 by the definition of $f$ and $z_{1},z_{2}\in\varphi_{\boldsymbol{\nu}}(K)$, we have
\begin{equation}\label{v}
\frac{\mid f(z_{1})-f(z_{2}) \mid}{\mid z_{1}-z_{2} \mid}
\leq\frac{\mathcal{H}^{s}(\varphi_{\boldsymbol{\nu}}(K)\bigcap E)}{\rho_{\boldsymbol{\nu}} \ \xi_{3}}
=\frac{\mathcal{H}^{s}(\varphi_{\boldsymbol{\nu}}(E))}{\rho_{\boldsymbol{\nu}} \ \xi_{3}}= M_{3} (\rho_{\boldsymbol{\nu}})^{s-1},
\end{equation}
where $\xi_{3}=\min\limits_{\substack{\ell\in\{1,\ldots,m+N\}\\  u,u'\in\{1,2,\ldots,N\}}}\{dist\big(\varphi_{u}(A_{m+N}),\varphi_{u'}(A_{\ell})\big); \varphi_{u}(A_{m+N})\cap\varphi_{u'}(A_{\ell})=\emptyset\}$ and $M_{3}=\frac{\mathcal{H}^{s}(E)}{\xi_{3}}$.

By symmetry, the case $\ell_{2} \! < \! \ell_{1}$ is similar as above.

Let $|z_{1}-z_{2}| \! \rightarrow \! 0$, then $|\boldsymbol{\nu}| \! \rightarrow \! \infty$ which lead to $\rho_{\boldsymbol{\nu}} \! \rightarrow \! 0$, it follows that $(\rho_{\boldsymbol{\nu}})^{s-1} \! \rightarrow \! 0$ by $s \! > \! 1$. Summing up with \eqref{q}-\eqref{v}, $| f(z_{1})-f(z_{2})| \! = \! o(| z_{1}-z_{2}|)$ holds for any $z_{1},z_{2}\in(K,\{a,b\})$, then $K$ is a Whitney set by Lemma \ref{lem-extention}. The theorem is proved.
\end{proof}

\end{document}